\newtheorem{theorem}{Theorem}[section]
\newtheorem{lemma}[theorem]{Lemma}
\newtheorem{corollary}[theorem]{Corollary}
\newtheorem*{theorem*}{Theorem}{\bf}{\it}
\newtheorem*{proposition*}{Proposition}{\bf}{\it}
\newtheorem*{observation*}{Observation}{\bf}{\it}
\newtheorem*{lemma*}{Lemma}{\bf}{\it}
\theoremstyle{definition}
\theoremstyle{remark}
\newtheorem{remark}[theorem]{Remark}
\newcommand{\Z}{\mathbb Z}
\newcommand{\R}{\mathbb R}
\def\XXint#1#2#3{{\setbox0=\hbox{$#1{#2#3}{\int}$ }
\vcenter{\hbox{$#2#3$ }}\kern-.6\wd0}}
\begin{document}
\title[]{Local version of Courant's nodal domain theorem}
\keywords{Courant's nodal domain theorem, nodal sets, Remez inequality, Landis' growth lemma, Landis' conjecture}
%\thanks{ ... }

\author{S. Chanillo}
\address{Sagun Chanillo: Department of Mathematics - Hill Center 
Rutgers, The State University of New Jersey, NJ, USA}
%110 Frelinghuysen Rd. 
%Piscataway, NJ 08854-8019}
\email{chanillo@math.rutgers.edu}

\author{A. Logunov}
\address{Alexander Logunov: Department of Mathematics, Princeton University, Princeton, NJ, USA}
\email{log239@yandex.ru}

\author{E. Malinnikova}
\address{Eugenia Malinnikova: Department of Mathematics, Stanford University, Stanford, CA, USA}
\email{eugeniam@stanford.edu}

\author{D. Mangoubi}
\address{ Dan Mangoubi:
	Einstein Institute of Mathematics,
	The Hebrew University,
	Jerusalem,
	Israel}
\email{dan.mangoubi@mail.huji.ac.il}
\begin{abstract}
Let $(M^n,g)$ be a closed $n$-dimensional Riemannian manifold, where  $g=(g_{ij})$ is $C^1$-smooth metric.
Consider the sequence of eigenfunctions $u_k$ of the Laplace operator on $M$. Let $B$ be a ball on $M$.
We prove that the number of nodal domains of $u_k$ that intersect  $B$
is not greater than $$C_1\frac{\textup{Volume}_g(B)}{\textup{Volume}_g(M)}k + C_2 k^{\frac{n-1}{n}},$$
where $C_1$, $C_2$ depend on $M$.
The problem of local bounds for the volume and for the number of nodal domains was raised by Donnelly and Fefferman, who also proposed an idea how one can prove such bounds.
 We combine their idea with two ingredients:
the recent sharp Remez type inequality for eigenfunctions and the Landis type growth lemma in narrow domains. 
\end{abstract}
\maketitle
%%%%%%%%%%%%%%%%%%%%%%%%%%%%%%%%%%%%%%%%%%%%%
\section{Introduction}
Let $(M^n,g)$ be a closed Riemannian manifold with $C^1$-smooth $g_{ij}$.
The spectrum of the Laplace operator on $M$ is discrete. There is a sequence of 
eigenvalues $$0=\lambda_1 < \lambda_2 \leq \lambda_3 \dots$$
that tend to $\infty$ 
and a sequence of (real) eigenfunctions $u_k$ such that 
$$ \Delta_g u_k + \lambda_k u_k=0.$$
Our enumeration of eigenvalues is non-standard. We start with $\lambda_1=0$ and
$u_1=1$ on $M$. 
The nodal domains of $u_k$ are the connected components of $M\setminus Z_{u_k}$, where 
$Z_{u_k}$ is the zero set of $u_k$ ($Z_{u_k}$ is called the nodal set of $u_k$).
The Courant nodal domain theorem states that the $k$-th eigenfunction $u_k$ has
at most $k$ nodal domains. If the multiplicity of an eigenvalue is more than 1, one may enumerate the eigenfunctions corresponding to this eigenvalue in any order. 
Our main result is the local version of Courant's theorem.

\pagebreak

\begin{theorem} \label{Courant}  Consider a ball $B=\{ x \in M: d_g(x,x_0) < r \}$ with center at  $x_0\in M$ and radius $r<r_0(M)$.  
For any eigenfunction $u_k$, the number of connected components of $B \setminus Z_{u_k} $ that intersect $\frac{1}{2}B$ \textup{(}the local number of nodal domains in $B$\textup{)} is not greater than
$$  C_1\frac{\textup{Volume}_g(B)}{\textup{Volume}_g(M)}k + C_2 k^{\frac{n-1}{n}},$$
where $C_1,C_2$ depend only on $(M,g)$ and are independent of $k$ and $B$. 
\end{theorem}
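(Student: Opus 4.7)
The plan is to partition the components of $B\setminus Z_{u_k}$ that meet $\frac{1}{2}B$ into two families: the \emph{inner} components $C$ contained in a slightly larger concentric ball $cB$ (e.g.\ $c=3/4$), and the \emph{crossing} components that reach out to $\partial(cB)$. The inner family will yield the main volume-proportional term $C_1 \mathrm{Vol}_g(B)/\mathrm{Vol}_g(M)\cdot k$ via a Faber-Krahn/Weyl argument, and the crossings will give the lower-order $k^{(n-1)/n}$ correction via the Remez and Landis inputs highlighted in the abstract.

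For an inner component $C\subset cB$, the boundary $\partial C$ lies entirely in $Z_{u_k}$, so $u_k|_C$ (extended by zero) is a first Dirichlet eigenfunction of $-\Delta_g$ on $C$ with eigenvalue $\lambda_k$. Faber-Krahn then gives $\mathrm{Vol}_g(C)\gtrsim \lambda_k^{-n/2}$, and the Weyl bound $\lambda_k^{n/2}\asymp k/\mathrm{Vol}_g(M)$ yields $\mathrm{Vol}_g(C)\gtrsim \mathrm{Vol}_g(M)/k$. Since the inner components are pairwise disjoint and lie inside $cB$, their count is at most $C_1 \mathrm{Vol}_g(B)\,k/\mathrm{Vol}_g(M)$.

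For the crossing components, I would fix a slicing radius $\rho\in(cr,r)$. Each crossing $C$ must meet $S_\rho$, and any connected component of $C\cap S_\rho$ is in fact a connected component of $\{u_k\ne 0\}\cap S_\rho$; distinct crossings therefore yield disjoint components on $S_\rho$. If one can establish a uniform lower bound of order $\lambda_k^{-(n-1)/2}$ on the $(n-1)$-dimensional area of each such spherical component, then the total number of crossings is at most $C r^{n-1}\lambda_k^{(n-1)/2}\lesssim k^{(n-1)/n}$. To prove such a lower bound one exploits the narrow-tube geometry of $C\cap(B\setminus cB)$: $u_k$ vanishes on the lateral boundary of this tube and satisfies $\Delta_g u_k+\lambda_k u_k=0$, so the Landis-type growth lemma for narrow domains forces $u_k$ to decay across the annulus at a rate governed by the transverse size of the tube, while the sharp Remez inequality for eigenfunctions converts that decay into $L^\infty$-smallness in $C\cap\frac{1}{2}B$, contradicting $u_k\not\equiv 0$ on $C$ unless the cross-sectional area is at least $c'\lambda_k^{-(n-1)/2}$.

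The main obstacle will be the quantitative execution of this crossing estimate: combining the Landis growth lemma in narrow domains with the sharp Remez inequality for eigenfunctions (both demanding care under only $C^1$ metric regularity) into a clean $\lambda_k^{-(n-1)/2}$ area lower bound per slice is the delicate point. One must also arrange the slicing radius $\rho$ so that $S_\rho$ meets the crossing tubes in regular enough cross-sections to avoid degenerate situations; this should be achievable via a co-area averaging argument over $\rho\in[cr,r]$ to discard a thin set of bad radii.
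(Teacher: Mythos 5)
Your decomposition into inner components $C\subset cB$ (handled by Faber--Krahn and Weyl) and crossing components is genuinely different from the paper's, and the inner half is sound and even cleaner than the paper's route to the main term: since $\partial C\subset Z_{u_k}$, $u_k|_C$ is a first Dirichlet eigenfunction, so Faber--Krahn gives $\mathrm{Vol}_g(C)\gtrsim\lambda_k^{-n/2}$ directly, while the paper instead splits off the domains that are not $c_0$-narrow on scale $1/\sqrt{\lambda}$ and gets the same $\lambda^{n/2}r^n$ bound by a cruder volume estimate. The trouble is entirely in the crossing half.

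The quantitative claim you propose there --- that each component of a crossing tube on a slice $S_\rho$ has $(n-1)$-area at least $c'\lambda_k^{-(n-1)/2}$, hence the count of crossings is $\lesssim r^{n-1}\lambda_k^{(n-1)/2}$ --- is false, and the paper's own sphere example shows it. For $u=\Re(x+iy)^m$ on $S^2$, with $B=B_r$ at the North pole and $r$ small, each of the $\asymp m\asymp\sqrt{\lambda}$ wedge-shaped nodal domains crosses every sphere $S_\rho$ with $\rho\in(cr,r)$, yet the cross-section there has length $\asymp \rho/m \asymp r/\sqrt{\lambda}$, which is a factor $r$ smaller than your claimed $\lambda^{-1/2}$. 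Correspondingly the true crossing count is $\asymp\sqrt{\lambda}$, not $\lesssim r\sqrt{\lambda}$. So the cross-section lower bound you would need is $\gtrsim (r/\sqrt{\lambda})^{n-1}$, not $\gtrsim\lambda^{-(n-1)/2}$, and this stronger bound is exactly the hard part. Running the co-area argument through the paper's local volume estimate only yields cross-sections $\gtrsim (r/(\sqrt{\lambda}\log\lambda))^{n-1}$, i.e.\ a count with extra $(\log\lambda)^{n-1}$ factors; a per-component, per-slice bound sharp enough to kill the logs is not available with these tools. The paper sidesteps this entirely: instead of a pointwise cross-section bound it uses Nazarov's multi-scale counting argument, introducing a nested family $S_m$ of components that stay $\varepsilon^{n-1}$-thin at all dyadic scales $2^{-j}r$, $j\le m$, and showing via the Remez inequality that $|S_m\setminus S_{m+1}|\le K4^{-m-2}$; geometric summation then forces at least half the components to remain thin at every scale, which is impossible since each component fills the ball $B_\rho(x_i)$ for $\rho$ small. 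That global accounting is what replaces your slicing step, and without something of comparable strength the crossing bound does not close.
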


 The main issue addressed in this paper is why a nodal domain cannot be very long and narrow. A local bound on the volume of a nodal domain is proved in Theorem \ref{main}.
 
The starting point is the idea due to Donnelly and Fefferman \cite{DF90} that one can use growth estimates for eigenfunctions to prove the local bounds for the volume and for the number of nodal domains. Donnelly and Fefferman showed in \cite{DF90} that if $B$ is a ball of radius $1/\sqrt{\lambda_k}$, then the number of connected components of $B\setminus Z_{u_k}$ that intersect a twice smaller ball with the same center is at most
 $C_1(M)k^{C_n}$, where $C_n$ is an explicit constant strictly bigger than 1, which depends on the dimension $n$ only.
Donnelly and Fefferman conjectured  that $C_n$ can be improved to 1 (like in the Courant theorem).
 The constant $C_n$ was improved  by Chanillo and Muckenhoupt  \cite{CM91} and then  by Lu \cite{L93}, and  by Han and
Lu \cite{HL11}.  The arguments involved subtle versions of covering lemmas and improvements to BMO bounds for $\log|u_k|$.   However the improved $C_n$ was still bigger than 1. In this work we don't use the language of BMO norms, and argue in terms of closely related Remez type inequality. We discuss growth estimates of eigenfunctions in Section \ref{sec: growth}.

  In the proof of Theorem \ref{Courant} we combine the idea  of Donnelly and Fefferman with two ingredients. The first ingredient is
the sharp Remez type inequality for eigenfunctions, which is related to a (resolved) conjecture by Landis on a three balls theorem for wild sets. The second ingredient is Landis type growth lemma for eigenfunctions. The lemma and the conjecture by Landis are independent statements. These two ingredients  give two estimates of growth, which however compete with each other.  Our local bound on the number 
  of nodal domains on scale $1/\sqrt \lambda_k$ is $Ck^{\frac{n-1}{n}}$, which
  is better than $Ck$. The bound $Ck$ follows from the idea of Donnelly and Fefferman and the sharp Remez type inequality for eigenfunctions. The improvement from $Ck$ to $Ck^{\frac{n-1}{n}} (\log k)^{n-1}$ is obtained using  the local volume bound for nodal domains proved in Section \ref{sec: volume}. The proof uses one more tool due to Landis, which is discussed in Section \ref{sec: Landis}. This approach is similar to the one in \cite{M}, where estimates on the volume of a connected component of $B_{1/\sqrt{\lambda_k}}\setminus Z(u_k)$ are obtained under the additional assumption that the Riemannian metric is real-analytic.
 Finally, the sharp bound  $Ck^{\frac{n-1}{n}}$  follows  by adding the argument of Fedor Nazarov in Section \ref{sec: Fedya}. This sharp bound is new for spherical harmonics.

 We would like to mention that in dimension two there is an alternative approach due to the elegant idea of A. Eremenko (see Appendix), which however is  two-dimensional only.
  
  Our local bound on the number of nodal domains is sharp. 
When the dimension $n=2$,  one may consider the unit sphere in $\mathbb{R}^3$
and the restriction on $S^2$ of the homogeneous harmonic polynomial $u(x,y,z)=\Re(x+iy)^m$.
It is a spherical harmonic with $\lambda_k\asymp k \asymp m^2$, whose nodal set consists of $m$ circles intersecting at the poles.
Let $x$  be the North pole. Then the number of nodal domains intersecting a geodesic ball $B_r(x)$ on $S^2$ is $\asymp \sqrt k$.  
This example generalizes to higher dimensions, see \cite[Proposition 7.1]{M}.
On the other hand, consider an eigenfunction on the standard $n$-dimensional flat torus,  $T=(\R/\Z)^n$,  of the form \[u(x_1,..,x_n)=\prod_{j=1}^n \sin(2\pi mx_j).\]
	The corresponding eigenvalue is $\lambda_k\asymp k^{2/n}\asymp m^2$. The nodal domains are $m^n$  cubes on the torus. If  $B$ is a ball of radius $r>C/m$, then the number of nodal domains intersecting $B$ is $\asymp (rm)^n\asymp kr^n$.

 \textbf{Notation.} By $B_r(x)$ we will denote a ball with center at $x$ and radius $r$ in local coordinates on $M$. So $B_r(x)$ can be identified with a Euclidean ball and the symbol $|B_r(x)|$ will be used not for the volume with respect to the metric $g$, but for the Euclidean volume of $B_r(x)$ in local coordinates. Note that these two volumes are comparable. By $B$ we will denote a Euclidean ball in local coordinates on $M$, whose radius and center are not specified, and $B_r$ will be used for any ball of radius $r$.  By $r_0,c,c_1,c_2, \dots$ we will denote small constants and by $C,C_1,\dots$ large constants that may depend on $(M,g)$, but are independent of $\lambda$. In local coordinates we always assume that  the operator $\Delta_g$ is uniformly elliptic with bounded derivatives of the coefficients $g_{ij}$.

It is more convenient to argue in terms of $\lambda_k$ rather than in terms of $k$. The Weyl law states that 
$\lambda_k^{n/2} \sim c k$. Theorem \ref{Courant} follows from the next theorem.

\begin{theorem} \label{main}
Let $u$ be a Laplace eigenfunction with eigenvalue $\lambda>2$ on a closed Riemannian manifold $(M^n,g)$, where $g$ is $C^1$ -- smooth.  Given a ball $B_{2r}$ in local coordinates, consider the connected components $\Omega_i\subset B_{2r}$ of $B_{2r}/Z_u$ that intersect $B_r$. Then for all $\Omega_i$, we have 
$$ |\Omega_i| \geq c\min\left(\lambda^{-n/2}, \frac{r^{n}}{\left(\sqrt \lambda \log \lambda\right)^{n-1}}\right).$$
Furthermore the total number of $\Omega_i$ is 
not greater than $$C\lambda^{n/2}r^n + C  \lambda^{\frac{n-1}{2}}.$$ 
\end{theorem}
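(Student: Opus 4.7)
The plan is to establish the volume lower bound first and then deduce the count. For a fixed component $\Omega_i$, I would split according to whether $\overline{\Omega_i}\subset B_{2r}$ (the interior case) or $\Omega_i$ meets $\partial B_{2r}$ (the boundary-touching case). In the interior case, $u\equiv 0$ on $\partial\Omega_i$ and $(\Delta_g+\lambda)u=0$ in $\Omega_i$, so $u$ is a Dirichlet eigenfunction of $\Omega_i$ for the eigenvalue $\lambda$, and Faber--Krahn gives $|\Omega_i|\geq c\lambda^{-n/2}$. In the boundary-touching case, since $\Omega_i$ meets both $B_r$ and $\partial B_{2r}$, any path in $\Omega_i$ joining the two sides crosses the annulus $B_{2r}\setminus B_r$ and thus has length at least $r$. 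Here I would invoke the Landis-type growth lemma in narrow domains from Section~\ref{sec: Landis}: along a subdomain through which $u$ has constant sign, of length $L$ and cross-sectional width $w$, $\sup|u|$ must grow from one end to the other by a factor comparable to $\exp(cL/w)$, modulo corrections involving $\lambda$. Iterating the lemma along $\Omega_i$ and comparing with the doubling estimate $\sup_{2B}|u|\leq e^{C\sqrt\lambda}\sup_B|u|$ of Section~\ref{sec: growth}, the ratio $r/w$ is forced to be at most $C\sqrt\lambda\log\lambda$, so $w\geq cr/(\sqrt\lambda\log\lambda)$ and hence $|\Omega_i|\geq cr^n/(\sqrt\lambda\log\lambda)^{n-1}$.

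For the count, the components $\Omega_i$ are disjoint subsets of $B_{2r}$, so $\sum_i |\Omega_i|\leq |B_{2r}|\leq Cr^n$. Separating the two types and dividing by the respective lower bounds yields
\[
\#\{\Omega_i\}\leq C\lambda^{n/2}r^n + C\bigl(\sqrt\lambda\log\lambda\bigr)^{n-1}.
\]
The first term is already sharp; to trade the parasitic $(\log\lambda)^{n-1}$ in the second term for the claimed $C\lambda^{(n-1)/2}$, I would add Nazarov's refinement from Section~\ref{sec: Fedya}. Intuitively, boundary-touching components are anchored to the $(n-1)$-sphere $\partial B_{2r}$ and can be counted by a surface-based argument that bypasses the logarithmic loss inherent in the Landis--doubling balance.

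The main obstacle is the boundary-touching width estimate with the correct $\log\lambda$ loss. The classical Landis growth lemma delivers tube growth $\exp(cL/w)$ for harmonic-type functions; adapting it to the eigenfunction $u$ requires carefully absorbing the zeroth-order term $\lambda u$ and balancing it against the ambient doubling exponent $\sqrt\lambda$, and the $\log\lambda$ factor emerges naturally from iterating the growth lemma over the $\sim\log\lambda$ nested scales needed to chain across $\Omega_i$ from $B_r$ to $\partial B_{2r}$. The Faber--Krahn step and the final Nazarov refinement are, by comparison, routine.
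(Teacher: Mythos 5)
Your high-level architecture matches the paper's: prove a volume lower bound for each component, divide $|B_{2r}|$ by the bound to count, and invoke Nazarov's refinement to shave the log. But the central step — the volume lower bound in the ``long and narrow'' case — has a genuine gap, and the proposed mechanism for the $\log\lambda$ factor is not correct.

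The difficulty is that the Landis growth lemma (Lemma~\ref{le: Landis}) controls $\sup_{\Omega\cap B_\rho}|u|$, a supremum taken only over $\Omega$, while the doubling estimate~\eqref{eq: doubling} controls $\sup_{B_\rho}|u|$ over the full ball. You propose to iterate the growth lemma inward and ``compare with the doubling estimate,'' but there is no a priori relation between $\sup_{\Omega\cap B_\rho}|u|$ and $\sup_{B_\rho}|u|$; the eigenfunction could be vanishingly small on the thin sliver $\Omega\cap B_\rho$ relative to the rest of $B_\rho$, so the doubling bound alone does not close the loop. This is exactly what the Remez inequality~\eqref{eq: Remez} is for: it gives $\sup_{E}|u|\gtrsim(|E|/|B|)^{C\sqrt\lambda}\sup_B|u|$ for an arbitrary positive-measure set $E$, which the paper applies with $E=\Omega\cap B_{r_0}(x)$ at a scale $r_0$ where the density first reaches $\varepsilon^{n-1}$. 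Without this ingredient your balance cannot be carried out, and Section~\ref{sec: growth} emphasizes that Remez — not merely doubling — is the first main tool. Moreover, the $\log\lambda$ in the paper's bound does \emph{not} come from iterating over $\sim\log\lambda$ nested scales (the number of scales is $\log(r/r_0)$, unrelated to $\lambda$); it comes from the factor $\varepsilon^{(n-1)C\sqrt\lambda}$ appearing on the Remez side of the inequality, which resolves into $\varepsilon\log(1/\varepsilon)\gtrsim 1/\sqrt\lambda$ and hence $\varepsilon\gtrsim 1/(\sqrt\lambda\log\lambda)$. If a bare Landis-versus-doubling comparison actually worked, you would get the stronger bound $r^n/\lambda^{(n-1)/2}$ with no log at all, which should be a sign that something is off.

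Two smaller remarks. Your dichotomy (interior components handled by Faber--Krahn, boundary-touching handled by Landis) is a legitimate alternative to the paper's ($c_0$-narrow vs.\ not $c_0$-narrow), and Faber--Krahn is a clean way to get the $\lambda^{-n/2}$ floor for genuine nodal domains; the paper's version absorbs both cases uniformly by observing that a non-narrow set already contains $\gtrsim c_0$ of some ball of radius $1/\sqrt\lambda$. And your description of Nazarov's refinement as ``a surface-based argument anchored to $\partial B_{2r}$'' is not how Section~\ref{sec: Fedya} works: the argument there has nothing to do with $\partial B_{2r}$; it sorts the narrow components into nested classes $S_m$ according to how thin they remain across dyadic scales $2^{-m}r$, applies Remez \emph{collectively} to the union of the pieces in $S_m$, and shows $|S_m\setminus S_{m+1}|\le K 4^{-m-2}$, giving a contradiction since $S_m$ cannot stay nonempty for all $m$.
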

\begin{remark} The assumption $\lambda>2$ is just to ensure $\log \lambda > 0$. 
\end{remark}
\begin{remark} The local bound on the number of nodal domains is consistent (on the heuristic level) with the upper bound in Yau's conjecture (see \cite{LM},\cite{Yau})
on the volume of nodal sets: 
$$ H^{n-1}(Z_u) \leq C \sqrt \lambda,$$
where $H^{n-1}$ denotes $(n-1)$ dimensional Hausdorff measure. If  Yau's conjecture is true, then one 
may expect that the average number of nodal domains intersecting a ball of radius $1/\sqrt \lambda$ (average with respect to shifting the center of the ball along the manifold) should be constant (see  \cite{LM}, \cite{NPS}).
If we fix $r$ and let $\lambda$ be very large, then the number of nodal domains in $B_r$ should be not more than
$C(r \sqrt \lambda)^n$. The heuristic obstacle in both questions is the same: why there cannot be many nodal domains that are long, twisting and narrow.
\end{remark}

\noindent\textbf{Acknowledgements.} 
We are grateful to Mikhail Sodin and Fedor Nazarov, who read the draft of the text and
helped to improve it. The preliminary version of the text proved a non-sharp local bound for the number of nodal domains.  Fedor Nazarov suggested an argument that removed extra logarithms and made the statement sharp. 

 This work was completed during the time A.L. served as Sloan Fellow and Packard Fellow.
E.M. was partially supported by NSF grant DMS-1956294 and by the Research Council of Norway, Project 275113. D.M. was supported by  ISF grant no. 681/18.

\section{The main tools.}
\subsection{Remez type inequality for eigenfunctions.} \label{sec: growth}

We note that this section is not self-contained. The first main ingredient of this paper is the Remez type inequality for eigenfunctions.  The complete proof of this inequality is contained in the lecture notes \cite{LM19},\cite{LM18} and we decided not to include it here. 

Let $u$ be a Laplace eigenfunction with eigenvalue $\lambda$ on a closed Riemannian manifold $(M^n,g)$, where $g$ is $C^1$ -- smooth in local coordinates. Donnelly and Fefferman \cite{DF88}  proved the following growth estimate for Laplace eigenfunctions (in \cite{DF88} it is formulated for $C^\infty$ -- smooth metrics, but only $C^1$ -- smoothness of $g_{ij}$ is actually needed, see Appendix):
\begin{equation} \label{eq: doubling}
 \sup_{\frac{1}{2}B} |u| \geq c e^{-C\sqrt \lambda }\sup_{B}|u|
\end{equation} 
for any ball $B \subset M$ and the twice smaller ball $\frac{1}{2}B \subset M$ with the same center. 
\begin{remark}
 The recent works \cite{BD18},\cite{DJ18},\cite{DJN} on the distribution of $L^2$ mass of eigenfunctions imply
 that for two-dimensional surfaces with negative curvature, one can improve  $C\sqrt \lambda $ in \eqref{eq: doubling} to $o(\sqrt \lambda)$.
\end{remark}

 In \cite{DF90} Donnelly and Fefferman raised a question on the distribution of values of the eigenfunctions. One of the simplest versions of their question is the following. Let $C$ be a large constant and normalize the eigenfunction $u$ so that $\sup_M |u|=1$.
Can $|u|$ be $e^{-C\sqrt \lambda}$ small on half of $M$ (is it possible that the set where $|u|< e^{-C\sqrt \lambda}$ has measure at least half of $M$)?

The answer (see \cite{LM19},\cite{LM18}) is  that it cannot happen if $C$ is large enough (depending on $(M,g)$ and independent of $\lambda$). This question is related to  the  Landis conjecture (\cite{KL88},p.169), which states that
 if $h$ is a solution to divergence type equation
  $$\text{div}(A\nabla h)=0$$
  in a ball $B$, where $A$ is an elliptic matrix-valued function with smooth coefficients, then 
 the following version of three balls inequality for wild sets holds.
 
Let $K$ be a closed subset of $\frac{1}{2}B$  with positive volume $|K|>0$. There are constants $\alpha \in (0,1)$ and $C>0$, which depend only on $A$ and $|K|$, such that
 if $|h| \leq \varepsilon $ in $K$ and $|h|\leq 1$ in $B$, then 
 $$ |h|\leq C \varepsilon^{\alpha} \quad \text{ in } \frac{1}{2}B.$$
 The proof of the Landis conjecture  is presented in the lecture notes \cite{LM19}.
 
Finally,  the Remez type inequality for eigenfunctions (see \cite{LM19},\cite{LM18})  states that for any ball $B$ in local coordinates on $M$  and any set $E\subset B$ with positive volume, we have
\begin{equation} \label{eq: Remez}
 \sup_E |u| \geq c \left( \frac{c|E|}{|B|} \right)^{C\sqrt \lambda} \sup_{B} |u|,
 \end{equation}
 where $c,C >0$ depend on $M$.

\subsection{Weak maximum principle and a version of Landis' growth lemma.} \label{sec: Landis}
\begin{lemma}[\cite{L63}, p.24] \label{le: small landis}
 Let $h$ be a solution to 
$$  \textup{div}(A\nabla h)= 0 \quad \text{in } B_1(0)\subset \mathbb{R}^n,$$
where $A$ is an elliptic matrix valued function.  Let $r\in(0,1)$.
 Assume that a domain $\Omega\subset B_r(0)$ contains the origin and  $u$ is zero on $\partial \Omega \cap B_r(0)$.
If $$\frac{|\Omega|}{|B_r(0)|} \leq c_0,$$
where $c_0>0$ is a sufficiently small constant depending on the ellipticity constant of $A$,
 then $$|u(0)| \leq \frac{1}{100} \sup_{\Omega\cap B_r(0)} |u|.$$
\end{lemma}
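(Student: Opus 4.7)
The plan is to reduce the estimate to a standard local boundedness estimate for nonnegative subsolutions of divergence-form elliptic equations, after extending $|u|$ by zero to the full ball $B_r(0)$. The key point is that the volume constraint $|\Omega|/|B_r(0)| \leq c_0$ forces the $L^2$-norm of this extension to be tiny compared to its $L^\infty$-norm, and the De Giorgi--Moser estimate converts $L^2$-smallness into pointwise smallness at the origin.

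First I would observe that because $u$ is a weak solution of $\dv(A\nabla u)=0$, the function $|u|$ is a weak subsolution, since both $u^+$ and $u^-$ are (a standard truncation argument). Let $V := \sup_{\Omega\cap B_r(0)}|u|$. I would then extend $|u|$ by zero outside $\Omega\cap B_r(0)$ to obtain a function $\tw$ on $B_r(0)$. Because $u$ vanishes on $\partial\Omega\cap B_r(0)$, the extension $\tw$ lies in $W^{1,2}(B_r(0))$, and a short test-function computation (using the boundary-value hypothesis so that no spurious surface term appears) shows that $\tw$ is still a weak subsolution, $\dv(A\nabla \tw)\geq 0$, on the entire ball $B_r(0)$.

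Next I would invoke the De Giorgi--Moser local boundedness estimate for $\tw$ on $B_r(0)$. Rescaling $B_r(0)\to B_1(0)$, which preserves the ellipticity constants of $A$, this reads
\[
  \sup_{B_{r/2}(0)} \tw \;\leq\; C\left(\frac{1}{|B_r(0)|}\int_{B_r(0)} \tw^{\,2}\,dx\right)^{1/2},
\]
with $C$ depending only on the ellipticity of $A$. Since $\tw$ vanishes outside $\Omega\cap B_r(0)$ and is bounded there by $V$,
\[
  \int_{B_r(0)} \tw^{\,2}\,dx \;\leq\; V^{2}|\Omega| \;\leq\; V^{2}\,c_0\,|B_r(0)|,
\]
which gives $\sup_{B_{r/2}(0)}\tw \leq C\sqrt{c_0}\,V$. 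Choosing $c_0$ small enough that $C\sqrt{c_0}\leq 1/100$, and noting that $0\in\Omega$ so that $\tw(0)=|u(0)|$, yields the claimed bound.

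The step I expect to be the main obstacle is the bookkeeping one: verifying that $\tw$ genuinely defines a weak subsolution on all of $B_r(0)$. This relies crucially on the hypothesis $u=0$ on $\partial\Omega\cap B_r(0)$, which ensures that no boundary contribution appears when one integrates against nonnegative test functions supported in $B_r(0)$, together with the standard preservation of the subsolution property under taking positive parts. Once this extension lemma is in place, the rest of the argument is just a scaled application of the classical $L^\infty$--$L^2$ control for subsolutions.
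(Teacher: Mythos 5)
The paper does not actually prove this lemma; it cites it directly from Landis (\cite{L63}, p.~24), so there is no "paper's own proof" to compare against. Your argument, however, is a complete and correct modern proof, and it is arguably the cleanest route: $|u|=u^++u^-$ is a weak subsolution of the linear equation, its zero extension $\tw$ across $\partial\Omega\cap B_r(0)$ remains a subsolution because the trace of $|u|$ vanishes there, and the De Giorgi--Moser local boundedness estimate
\[
\sup_{B_{r/2}}\tw\le C\Bigl(\dashint_{B_r}\tw^2\Bigr)^{1/2}\le C\sqrt{c_0}\,\sup_{\Omega}|u|
\]
then gives the conclusion after choosing $c_0$ small. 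Landis's original argument is of a different flavour (a comparison/growth argument rather than an $L^2\to L^\infty$ estimate), but that is a matter of taste, not correctness. The only place I would tighten the write-up is the step you yourself flag as the main obstacle: asserting that $\tw$ is a subsolution on all of $B_r(0)$ "because no spurious surface term appears" is the right idea but is slightly informal, since one cannot literally integrate by parts against $\chi_\Omega\phi$ without regularity of $\partial\Omega$. The standard fix is to approximate from below, e.g.\ set $\tw_\eps:=\max(|u|-\eps,0)\chi_\Omega$; by continuity of $u$ this function vanishes in a neighbourhood of $\partial\Omega\cap B_r(0)$, hence is manifestly a $W^{1,2}$ subsolution in $B_r(0)$, and one lets $\eps\to0$ after applying De Giorgi--Moser. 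With that one-line regularisation added, the proof is airtight.
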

 We note that the maximum principle does not hold for Laplace eigenfunctions and it creates some obstacles.
 However for narrow domains a version of the maximum principle holds.
\begin{corollary} \label{le: maximum} Let $u$ be the Laplace eigenfunction on $M$ with eigenvalue $\lambda$.
Consider a ball $B_{\rho}$ \textup{(}in local coordinates\textup{)}  with center at $x$ and radius $\rho \leq 1 / \sqrt{\lambda}$. Let $\Omega$ be a connected component of $B_{\rho}\setminus Z_u$ that contains $x$.
If $$ \frac{|\Omega |}{|B_{\rho}|} \leq c_0,$$
where $c_0>0$ is sufficiently small, then   
$$|u(x)| \leq \frac{1}{10} \sup_{\Omega} |u|.$$
\end{corollary}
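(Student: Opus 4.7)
The plan is to reduce the eigenfunction to a divergence-type solution so that Lemma \ref{le: small landis} can be applied directly. The standard device is to lift to one extra dimension: on $M\times\R$, equipped with the product metric, define
$$ \tilde h(y,t) = u(y)\cosh(\sqrt \lambda\, t). $$
A direct computation using $\Delta_g u = -\lambda u$ shows $\Delta_{g\oplus dt^2}\tilde h = 0$, so in local product coordinates on $M\times \R$ the function $\tilde h$ solves a uniformly elliptic divergence-form equation $\dv(\tilde A \nabla \tilde h)=0$, with ellipticity constant controlled by that of $\Delta_g$. Note also that $\tilde h(x,0) = u(x)$ and, since $\cosh(\sqrt \lambda\, t)>0$,
$$ Z_{\tilde h} = Z_u \times \R. $$

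The next step is to transport the hypothesis from $\Omega$ to its lift. In the local coordinates on $M\times\R$, let $\tilde B_\rho$ be the Euclidean ball of radius $\rho$ centered at $(x,0)$, and let $\tilde\Omega$ be the connected component of $\tilde B_\rho\setminus Z_{\tilde h}$ containing $(x,0)$. I would first argue topologically that $\tilde\Omega\subset \Omega\times(-\rho,\rho)$: any point $(y,t)\in\tilde\Omega$ is joined to $(x,0)$ by a path in $\tilde B_\rho\setminus(Z_u\times\R)$, whose projection to $M$ is a path in $B_\rho(x)\setminus Z_u$ from $x$ to $y$, forcing $y\in\Omega$. Consequently
$$ |\tilde\Omega| \;\le\; 2\rho\,|\Omega|, \qquad |\tilde B_\rho| \;\asymp\; \rho^{n+1}, $$
so
$$ \frac{|\tilde\Omega|}{|\tilde B_\rho|} \;\le\; C_n\,\frac{|\Omega|}{|B_\rho|} \;\le\; C_n c_0. $$
Choosing $c_0$ small enough (depending only on the ellipticity constant of $\tilde A$, hence on $(M,g)$), the hypothesis of Lemma \ref{le: small landis} is fulfilled for $\tilde h$, the domain $\tilde\Omega$, and the ball $\tilde B_\rho$.

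Applying the lemma yields
$$ |u(x)| \;=\; |\tilde h(x,0)| \;\le\; \frac{1}{100}\sup_{\tilde\Omega}|\tilde h|. $$
Since $\tilde\Omega\subset \Omega\times(-\rho,\rho)$ and $\rho\le 1/\sqrt\lambda$,
$$ \sup_{\tilde\Omega}|\tilde h| \;\le\; \cosh(\sqrt\lambda\,\rho)\,\sup_\Omega|u| \;\le\; \cosh(1)\,\sup_\Omega|u|, $$
so $|u(x)|\le \cosh(1)/100\cdot \sup_\Omega |u| \le \tfrac{1}{10}\sup_\Omega|u|$, as required.

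The main obstacle I anticipate is not analytic but bookkeeping: one has to make sure the projection argument is clean (so that $\tilde\Omega$ really sits over $\Omega$ and not over some other component hit by a detour through $M\times\R$), and that the elliptic constants of the lifted operator remain controlled uniformly when we pass from the intrinsic product metric to Euclidean local coordinates. Both are routine once the lift $\tilde h=u\cosh(\sqrt\lambda\, t)$ is set up; everything else is a direct transcription of Lemma \ref{le: small landis}.
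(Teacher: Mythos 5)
Your proof is correct and follows essentially the same route as the paper: lift to $M\times\mathbb{R}$ via a harmonic extension and apply Lemma~\ref{le: small landis} in the product. The only cosmetic differences are that you use $\cosh(\sqrt\lambda\, t)$ where the paper uses $\exp(\sqrt\lambda\, t)$ (both are positive, so they give the same zero set and comparable bounds), and you pass to the connected component of $\tilde B_\rho\setminus Z_{\tilde h}$ and then show it lies in $\Omega\times(-\rho,\rho)$, whereas the paper works directly with the cylinder $\Omega\times(-\rho,\rho)$; these are equivalent up to trivial bookkeeping.
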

\begin{proof}
We will use the harmonic extension of eigenfunctions.
 Consider the function $h(x,t)=u(x)\exp(\sqrt \lambda t)$ on $M\times \mathbb{R}$ equipped with the Riemannian metric of the product. A direct computation shows that $h$ is harmonic in $M\times \mathbb{R}$. 
 Consider a ball $B$ on $M\times \mathbb{R}$ of radius $\rho$ with center at $(x,0)$.
 Denote $ \Omega\times (-\rho,\rho)$ by $\widetilde \Omega$.
 Note that $$ \frac{|\widetilde \Omega \cap B|}{|B|} \leq Cc_0.$$
 If $c_0$ is small, then by Lemma \ref{le: small landis}
  $$ |u(x)|=|h(x,0)| \leq \frac{1}{100} \sup_{\widetilde \Omega}|h|= \frac{e}{100} \sup_{ \Omega}|u|.$$
\end{proof}

\noindent\textbf{Definition.} We will say that an open set $\Omega$ is $c_0$ -- narrow on scale $\frac{1}{\sqrt \lambda}$
 if $$ \frac{|\Omega \cap B_{1/\sqrt \lambda}|}{|B_{1 / \sqrt{\lambda}}|} \leq c_0.$$
 for any ball $B_{1/\sqrt \lambda}$ of radius $1/\sqrt \lambda$ with center in $\overline{\Omega}$.

 \begin{lemma}[a version of weak maximum principle]
Let $B$ be an open ball in local coordinates on $M$ with radius $r\leq r_0$, and let $u$ be the Laplace eigenfunction on $M$ with eigenvalue $\lambda$. Suppose that an open set $\Omega \subset B$ is  $c_0$ -- narrow on scale $\frac{1}{\sqrt \lambda}$ and $u=0$ on $\partial \Omega\cap B$. If $c_0$ is sufficiently small \textup{(}smallness depends on $M$ and is independent of $\lambda$\textup{)}, then 
$$\sup_\Omega |u| \leq 3 \sup_{\partial \Omega \cap \partial B} |u|.$$
\end{lemma}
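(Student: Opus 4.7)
The plan is to prove the lemma by contradiction, using Corollary~\ref{le: maximum} as the main tool. Suppose $\sup_{\Omega}|u| > 3S$ where $S := \sup_{\partial\Omega \cap \partial B}|u|$. Let $M := \sup_{\overline\Omega}|u|$ be attained at some $x^* \in \overline\Omega$. I would first reduce to the case that $\Omega$ is connected (treating each component separately), so that $u$ has constant sign on $\Omega$.

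First I would dispose of the easy boundary cases: if $x^* \in \partial\Omega \cap B$ then $u(x^*)=0$, and if $x^* \in \partial\Omega \cap \partial B$ then $|u(x^*)| \leq S$. Either case contradicts $M > 3S$, so we may assume $x^* \in \Omega$.

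Next, in the deep-interior case $\operatorname{dist}(x^*, \partial B) \geq \rho := 1/\sqrt{\lambda}$, the ball $B_\rho(x^*)$ lies inside $B$. A connectivity argument shows that the component $D$ of $B_\rho(x^*) \setminus Z_u$ containing $x^*$ is contained in $\Omega$: any path in $D$ starting at $x^*$ stays in $B \setminus Z_u$, hence in the connected component $\Omega$ of $B \setminus Z_u$ through $x^*$. The $c_0$-narrowness of $\Omega$ then gives $|D| \leq |\Omega \cap B_\rho(x^*)| \leq c_0 |B_\rho|$, so Corollary~\ref{le: maximum} yields
$$M = |u(x^*)| \leq \tfrac{1}{10}\sup_D |u| \leq \tfrac{1}{10} M,$$
forcing $M = 0$, a contradiction.

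The delicate case is $x^* \in \Omega$ with $\operatorname{dist}(x^*, \partial B) < \rho$, where $B_\rho(x^*)$ spills outside $B$ and $D$ need not be contained in $\Omega$. Here I would mimic the harmonic-extension argument from the proof of Corollary~\ref{le: maximum}: lift $u$ to $h(x,t) = u(x) e^{\sqrt{\lambda}\, t}$ on $M \times \R$ and apply Lemma~\ref{le: small landis} in an $(n+1)$-dimensional ball centered at $(x^*, 0)$ to the lifted domain $D \times (-\rho, \rho)$. The narrowness of $\Omega \cap B$ on scale $\rho$ handles the portion of $D$ inside $B$. For the portion outside $B$, connectivity of $D$ forces $\overline{D}$ to cross $\partial B$ at a point $z \in \partial\Omega \cap \partial B$ with $|u(z)| \leq S$, and this boundary bound feeds into the Landis-type estimate in place of the missing narrowness. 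The resulting inequality reads $M \leq C\, S$, with $C$ made smaller than $3$ by choosing $c_0$ small enough, contradicting $M > 3S$.

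The main obstacle is exactly this boundary-layer case: tracking the part of $D$ that leaves $B$ and closing the argument through the harmonic extension requires carefully combining the narrowness of $\Omega \cap B$ with the boundary control $|u| \leq S$ on $\partial\Omega \cap \partial B$, while the interior case is a clean single application of Corollary~\ref{le: maximum}.
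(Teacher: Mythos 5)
Your interior case (when $\operatorname{dist}(x^*,\partial B)\geq 1/\sqrt\lambda$) is fine: the connectivity argument showing $D\subset\Omega$ works (using that $u=0$ on $\partial\Omega\cap B$ and $D$ avoids $Z_u$), so Corollary~\ref{le: maximum} applies directly and forces $M=0$. But this is the easy part, and the paper does not even bother to isolate it. The genuine content of the lemma is the boundary case, and there your proposal has a real gap.

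In the boundary case you say that the bound $|u|\leq S$ on $\partial\Omega\cap\partial B$ ``feeds into the Landis-type estimate in place of the missing narrowness.'' That is not a step; it is a wish. Lemma~\ref{le: small landis} requires the solution to \emph{vanish} on the relevant part of the boundary, and narrowness is used to control the measure of the domain, not as a substitute for zero boundary data. There is no version of the Landis lemma in the paper that accepts small-but-nonzero boundary values directly. To turn nonzero boundary data into zero boundary data one must subtract a comparison function, and this is exactly the construction your proposal is missing. The paper does this in a unified way (with no case split): it lifts to $h(x,t)=u(x)e^{\sqrt\lambda t}$ on the cylinder $\widetilde\Omega=\Omega\times(-1/\sqrt\lambda,1/\sqrt\lambda)$, introduces a cutoff $\eta(t)$ that is $1$ for $|t|\leq\frac{1}{2\sqrt\lambda}$ and $0$ for $|t|\geq\frac{3}{4\sqrt\lambda}$, and solves the Dirichlet problem for a harmonic $\tilde h$ in $\widetilde\Omega$ with boundary data $h\eta$ (solvability comes from the exterior cone condition for $Z_u$). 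Then the classical maximum principle gives $|\tilde h|\leq e$ in $\widetilde\Omega$, while $h-\tilde h$ vanishes on $\partial\widetilde\Omega\cap\{|t|<\frac{1}{2\sqrt\lambda}\}$ --- including over $\partial\Omega\cap\partial B$, where $u$ does not vanish. Applying Lemma~\ref{le: small landis} to $h-\tilde h$ at the max point and combining with $|\tilde h|\leq e$ yields $\sup_\Omega|u|\leq 3$. Without constructing such a corrector your boundary case does not close, so the proposal as written is incomplete precisely where the lemma is nontrivial.
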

\begin{proof}
WLOG, assume $\sup_{\partial \Omega \cap \partial B} |u|=1$.
 Consider the harmonic function $h(x,t)=u(x)\exp(\sqrt \lambda t)$ on $M\times \mathbb{R}$.
 Let $\widetilde \Omega= \Omega\times (-\frac{1}{\sqrt\lambda},\frac{1}{\sqrt\lambda})$ and let $\eta:\mathbb{R} \to [0,1]$ be a positive continuous function
  such that 
  \begin{itemize}
  \item[•] $\eta=1$ on $[-\frac{1}{2\sqrt\lambda},\frac{1}{2\sqrt\lambda}]$,
  \item[•] $\eta(t)=0$ for $t$ with $|t|>\frac{3}{4}\frac{1}{\sqrt\lambda}$.
  \end{itemize}
  Consider a harmonic function $\tilde h$ in $\widetilde \Omega$ such that
   $\tilde h(x,t)= h(x,t) \eta(t)$ on $\partial \widetilde \Omega$. We note that $\tilde h$ vanishes on the top and on the bottom of the boundary of the cylinder $\widetilde \Omega$. The zero sets of eigenfunctions satisfy the exterior cone condition (see Appendix), so $\Omega$ and $\widetilde \Omega$ also satisfy it. Hence the Dirichlet problem in $\widetilde \Omega$ is solvable, $\tilde h $ exists and by the maximum principle  $$\sup_{\widetilde \Omega}|\tilde h| = \sup_{\partial \widetilde \Omega}|\tilde h| = \sup_{\partial \Omega} |h| \sup_{(-\frac{1}{\sqrt\lambda},\frac{1}{\sqrt\lambda})} \exp(\sqrt \lambda t)= e.$$ Let $x_{max}$ be a point in $\overline{\Omega}$ such
   that $|u(x_{max})|=\sup_\Omega |u|$. We may assume $x_{max} \in \Omega$. Denote $(x_{max},0)$ by $\tilde x_{max}$.
   Note that $$ \frac{|\widetilde \Omega \cap B_{\frac{1}{2\sqrt\lambda}}(\tilde x_{max})|}{|B_{\frac{1}{2\sqrt\lambda}}(\tilde x_{max})|} \leq Cc_0$$
 and $h-\tilde h$ is zero on $\partial \widetilde \Omega \cap \{ -\frac{1}{2\sqrt\lambda}< t <\frac{1}{2\sqrt\lambda} \}$. If $c_0$ is sufficiently small, then by Lemma \ref{le: small landis} applied to $h-\tilde h$, we have 
   $$ |h(\tilde x_{max})-\tilde h(\tilde x_{max})| \leq \frac{1}{100} \sup_{\Omega \times \{-\frac{1}{2\sqrt\lambda},\frac{1}{2\sqrt\lambda}\}}|h-\tilde h|.$$
   Since $|\tilde h| \leq e$ in $\widetilde\Omega$, we have
   $$ |h(\tilde x_{max})| \leq e+\frac{e}{100} + \frac{1}{100} \sup_{\Omega \times \{-\frac{1}{2\sqrt\lambda},\frac{1}{2\sqrt\lambda}\}} |h| = e+\frac{e}{100} + \frac{1}{100}e^{1/2}|h(\tilde x_{max})|.$$
    Thus $|u(x_{max})|=|h(\tilde x_{max})|\leq 3$.
   
\end{proof}

\begin{lemma}[a version of Landis growth lemma] \label{le: Landis} 
 Let $u$ be the Laplace eigenfunction on $M$ with eigenvalue $\lambda$. Suppose that an open set $\Omega \subset B_r$ is  $c_0$ -- narrow on scale $\frac{1}{\sqrt \lambda}$ and $u=0$ on $\partial \Omega\cap B_r$. If $\varepsilon$ is  sufficiently small  and 
$$\frac{| \Omega|}{|B_r|} \leq \varepsilon^{n-1},$$
 then  $$ \sup_{B_{r/2}\cap \Omega}|u| \leq e^{-c/\varepsilon} \sup_{\Omega \cap B_r}|u|.$$
\end{lemma}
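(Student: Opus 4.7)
The plan is to iterate Corollary~\ref{le: maximum} along a chain $x_0,x_1,\dots,x_N\in\Omega$, each step contributing a multiplicative factor $\tfrac{1}{10}$. I would normalize $\sup_{\Omega\cap B_r}|u|=1$ and pick $x_0\in\overline{B_{r/2}\cap\Omega}$ realizing $|u(x_0)|=\sup_{B_{r/2}\cap\Omega}|u|$. At step $i$, having fixed a scale $\rho_i\le 1/\sqrt{\lambda}$ and let $\Omega_{x_i}$ denote the connected component of $B_{\rho_i}(x_i)\setminus Z_u$ containing $x_i$, I would apply Corollary~\ref{le: maximum}---after verifying that $|\Omega\cap B_{\rho_i}(x_i)|/|B_{\rho_i}(x_i)|\le c_0$---to get $|u(x_i)|\le \tfrac{1}{10}\sup_{\overline{\Omega_{x_i}}}|u|$. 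Choosing $x_{i+1}$ to be an argmax in $\overline{\Omega_{x_i}}$, one has $|x_{i+1}-x_i|\le\rho_i$, and the forced lower bound $|u(x_{i+1})|\ge 10|u(x_i)|>0$ prevents $x_{i+1}$ from lying on $Z_u$, so the chain stays in $\Omega$. After $N$ iterations $|u(x_0)|\le 10^{-N}$, so it suffices to produce $N\ge c/\varepsilon$ steps with the chain confined to $B_r$.

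\textbf{Scale selection.} I would split into two regimes. If $r\sqrt{\lambda}\ge C/\varepsilon$, take $\rho_i=1/\sqrt{\lambda}$: the lemma's hypothesis supplies $c_0$-narrowness at this scale, and $N\asymp r\sqrt{\lambda}\ge C/\varepsilon$ steps fit inside $B_r$, giving $10^{-N}\le e^{-c/\varepsilon}$. If instead $r\sqrt{\lambda}<C/\varepsilon$, take $\rho_i\asymp\varepsilon r$ (which is $<1/\sqrt{\lambda}$ in this regime); then $N\asymp 1/\varepsilon$ steps suffice. In the second regime, the required narrowness at the finer scale $\varepsilon r$ has to be extracted from the volume bound $|\Omega|/|B_r|\le\varepsilon^{n-1}$ rather than from the hypothesized narrowness at scale $1/\sqrt{\lambda}$.

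\textbf{Main obstacle.} The hard step is verifying narrowness at scale $\varepsilon r$ in the second regime. A direct estimate from the volume bound gives only $|\Omega\cap B_{\varepsilon r}|/|B_{\varepsilon r}|\lesssim 1/\varepsilon$, not $c_0$. What I would hope to exploit is that the $\sim 1/\varepsilon$ balls visited along the chain have bounded overlap, so $\sum_i|\Omega\cap B_{\rho_i}(x_i)|\lesssim|\Omega|\le\varepsilon^{n-1}|B_r|$; dividing by $N\asymp 1/\varepsilon$ makes the average step narrow enough, while the finitely many ``bad'' steps where local narrowness fails can be absorbed by a single application of the weak maximum principle proved just above (at a cost of a bounded multiplicative factor). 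Turning this heuristic into rigorous bookkeeping---in particular, showing that the argmax-driven chain does not get trapped near a concentration point of $\Omega$---is the technical heart of the proof.
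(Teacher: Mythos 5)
Your plan shares the core mechanism with the paper's proof---iterate Corollary~\ref{le: maximum} to accumulate factors of $\tfrac{1}{10}$, and split scales according to whether $\tilde\varepsilon r$ or $1/\sqrt\lambda$ is smaller---but the obstacle you flag in the second regime is a genuine gap, and your proposed fix does not close it. An argmax-driven chain gives you no control over where the balls $B_{\rho_i}(x_i)$ sit: the chain can repeatedly revisit the same ``thick'' portion of $\Omega$ (the values $|u(x_i)|$ increase, not the distances $|x_i-x_0|$), so the bounded-overlap claim $\sum_i|\Omega\cap B_{\rho_i}(x_i)|\lesssim|\Omega|$ has no justification, and ``finitely many bad steps'' is exactly the thing you cannot guarantee. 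Absorbing a bad step via the weak maximum principle also loses, not a bounded factor, but a factor of $3$, which is the same order as what a good step gains.

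The paper resolves this with a \emph{radial} organization that makes the relevant regions disjoint by construction. Set $M(\rho)=\sup_{\Omega\cap B_\rho}|u|$, pick $x_\rho\in\partial B_\rho\cap\Omega$ achieving $\sup_{\partial B_\rho\cap\Omega}|u|$, and use the weak-maximum-principle lemma (proved just before) to get $M(\rho)\le 3|u(x_\rho)|$; this is the device that lets one reason at \emph{sphere} points. Then let $\tilde\varepsilon=A\varepsilon$ and $\rho_k=r/2+k\tilde\varepsilon r$. Since $x_{\rho_k}\in\partial B_{\rho_k}$, the ball $B_{\tilde\varepsilon r}(x_{\rho_k})$ (or $B_{1/\sqrt\lambda}(x_{\rho_k})$ if $\tilde\varepsilon r\ge 1/\sqrt\lambda$) lies inside the annulus $B_{\rho_{k+1}}\setminus B_{\rho_{k-1}}$, so for each $k$ there is a dichotomy: either $|\Omega\cap(B_{\rho_{k+1}}\setminus B_{\rho_{k-1}})|\le c_0|B_{\tilde\varepsilon r}|$ and Corollary~\ref{le: maximum} gives $M(\rho_{k+1})\ge 10|u(x_{\rho_k})|\ge 3M(\rho_k)$, or this annulus is ``fat,'' $|\Omega\cap(B_{\rho_{k+1}}\setminus B_{\rho_{k-1}})|\ge c_0\tilde\varepsilon^n|B_r|$. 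The annuli with, say, even index are pairwise disjoint, so if at least half of the $K\asymp 1/\tilde\varepsilon$ indices were fat one would get $|\Omega|\gtrsim\tilde\varepsilon^{n-1}|B_r|>\varepsilon^{n-1}|B_r|$ for $A$ large, contradicting the hypothesis. Hence at least half the indices are good, and the product over good indices yields $\sup_{B_{r/2}\cap\Omega}|u|\le 3^{-cK}\le e^{-c/\varepsilon}$. In short, the missing idea in your proposal is to replace the uncontrolled chain by concentric annuli, which supply the disjointness your heuristic needs for free.
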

\begin{proof} Let $x_0$ be the center of $B_r$. By $B_{\rho}$ we will denote the ball with center at $x_0$ and radius $\rho$ (in local coordinates on $M$). 
Define the monotone function $$M(\rho)=\sup_{\Omega \cap B_\rho}|u|.$$
Let $x_\rho \in \partial B_\rho \cap \Omega$ be a point such that $$|u(x_\rho)|= \sup_{\partial B_\rho \cap \Omega}|u|.$$
By the weak maximum principle $$M(\rho) \leq 3|u(x_\rho)|.$$
Define $\tilde \varepsilon= A \varepsilon$, where $A$ is a large constant to be specified later.
We will show that if $\rho + \tilde \varepsilon r< r$ and if
$$ \frac{|\Omega \cap B_{\rho + \tilde \varepsilon r} \setminus B_{\rho- \tilde \varepsilon r}|}{|B_{\tilde \varepsilon r}|} \leq c_0,$$
 then 
 $$  |u(x_\rho)| \leq \frac{1}{10} M(\rho + \tilde \varepsilon r ).$$
 Indeed, if $\tilde \varepsilon r<1/\sqrt\lambda$ we may apply Corollary \ref{le: maximum}  to $B_{\tilde \varepsilon r}(x_\rho)$, and if $\tilde \varepsilon r\geq 1/\sqrt\lambda$ we may use the fact that $\Omega $ is   $c_0$-narrow on scale $1/\sqrt \lambda$ and apply Corollary \ref{le: maximum}  to $B_{1/\sqrt \lambda}(x_\rho)$.
Hence
 $$  M(\rho + \tilde \varepsilon r )\geq 10 |u(x_\rho)| \geq 3 M(\rho).$$
 Consider the numbers $\rho_k= r/2 + k\tilde \varepsilon r$ such that $\rho_k \in (r/2,r)$.
 The total number of such $k$ is $K \asymp 1/\tilde \varepsilon$. If at least half of $\rho_k$ satisfy $  M(\rho_{k+1} ) \geq 3 M(\rho_k)$, then we are done. If for at least half of $k$ we have $  M(\rho_{k+1}) < 3 M(\rho_k)$ (and therefore
 $|\Omega \cap B_{\rho_{k+1}} \setminus B_{\rho_{k-1}}|\geq c_0 \tilde \varepsilon^{n}|B_r|$),
 then 
 $$ |\Omega| \geq c_1 \tilde \varepsilon^n K |B_r| \geq c_2 \tilde \varepsilon^{n-1} |B_r| > \varepsilon^{n-1} |B_r| \quad \text{if } A=\frac{\tilde \varepsilon}{\varepsilon} \gg 1.$$
Thus it cannot happen that for at least half of $k$ we have $  M(\rho_{k+1}) < 3 M(\rho_k)$.
\end{proof}

\section{Proof of Theorem \ref{main}.}
\subsection{Local bound for the volume of nodal domain.} \label{sec: volume}
 Consider  any connected component $\Omega$ of $B_{2r}/Z_{u}$ that intersects $B_r$.
 We may assume that $\Omega$ is $c_0$ -- narrow on scale $\frac{1}{\sqrt \lambda}$, otherwise 
 $|\Omega|>c_1 \lambda^{-n/2}$ and there is nothing to prove.
 Let $x \in B_r \cap \Omega$ and define $\varepsilon>0$ by
 \begin{equation} \label{eq: epsilon}
  |\Omega|=\varepsilon^{n-1}|B_{r/4}(x)|.
 \end{equation}
Our goal is to show that $\varepsilon$ cannot be too small in terms of $\lambda$. 
  For any $\rho \in (r/4,r]$, we have
  $$ \frac{|\Omega \cap B_\rho(x)|}{|B_\rho(x)|} <  \varepsilon^{n-1}.$$
  Since $x\in \Omega$ and $\Omega$ is open 
  $$ \frac{|\Omega \cap B_\rho(x)|}{|B_\rho(x)|} = 1 \text{ for sufficiently small } \rho.$$
  Consider the interval $(r_0, r)$ such that 
  $$ \frac{|\Omega \cap B_\rho(x)|}{|B_\rho(x)|} < \varepsilon^{n-1}$$
  for all $\rho \in (r_0, r)$ and 
  \begin{equation} \label{eq:volume from below}
   \frac{|\Omega \cap B_{r_0}(x)|}{|B_{r_0}(x)|} =  \varepsilon^{n-1}.
  \end{equation}
   Note that $0<r_0\leq r/4$.
  We may assume that 
   $$ 2^{-k_0-1}r < r_0 \leq 2^{-k_0}r$$
   for some integer $k_0 \geq 2$ and $\varepsilon < 1/2$. For every integer $k\in [0,k_0]$, we can apply  growth Lemma \ref{le: Landis} in $B_{2^{-k}r}(x)$ to obtain 
    \begin{equation} \label{eq: multiply}
 \sup_{\Omega \cap B_{2^{-j-1}r}(x)}|u| \leq e^{-\frac{c_2}{\varepsilon}} \sup_{\Omega \cap B_{2^{-j}r}(x)}|u|,
\end{equation}
and by multiplying \eqref{eq: multiply} for $j=0,1,...,k_0-1$, we have
$$
 \sup_{\Omega \cap B_{r_0}(x)}|u| \leq \left(\frac{r_0}{r}\right)^{\frac{c_3}{\varepsilon}} \sup_{\Omega \cap B_r(x)}|u|
$$    
    and therefore 
    \begin{equation}
 \sup_{\Omega \cap B_{r_0}(x)}|u| \leq \left(\frac{r_0}{r}\right)^{\frac{c_3}{\varepsilon}} \sup_{B_r(x)}|u|.
\end{equation} 
  By the Remez-type inequality \eqref{eq: Remez} and by the choice of $r_0$ described by \eqref{eq:volume from below}, we have 
$$
 \sup_{\Omega \cap B_{r_0}(x)}|u| \geq c\left(c \frac{|\Omega \cap B_{r_0}(x)|}{|B_{r}(x)|}\right)^{C\sqrt \lambda} \sup_{B_r(x)}|u| = $$ $$= c^{C\sqrt \lambda +1} \left( \frac{|\Omega \cap B_{r_0}(x)|}{|B_{r_0}(x)|}\right)^{C\sqrt \lambda} \left( \frac{|B_{r_0}(x)|}{|B_{r}(x)|}\right)^{C\sqrt \lambda} \sup_{B_r(x)}|u| \geq
$$ 
$$\geq c_4^{C\sqrt \lambda +1} \varepsilon^{(n-1)C\sqrt \lambda}  \left( \frac{r_0}{r}\right)^{C\sqrt \lambda} \sup_{B_r(x)}|u|. $$
  So 
  $$ \left(\frac{r_0}{r}\right)^{\frac{c_3}{\varepsilon}} \geq c_4^{C\sqrt \lambda +1} \varepsilon^{(n-1)C\sqrt \lambda}  \left( \frac{r_0}{r}\right)^{C\sqrt \lambda}.$$
 Recall that  $\frac{r_0}{r}\leq\frac 1 4$. If $\frac{1}{\varepsilon} \gg \sqrt \lambda$, then
 $$  \left(\frac{r_0}{r}\right)^{\frac{c_3}{2\varepsilon}} < c_4^{C\sqrt \lambda +1} \left( \frac{r_0}{r}\right)^{C\sqrt \lambda}$$
  and therefore
  $$ \left(\frac{1}{4}\right)^{\frac{c_3}{2\varepsilon}} \geq \varepsilon^{(n-1)C\sqrt \lambda},$$
  which yields
  $$  \varepsilon \log  \frac{1}{\varepsilon} \geq \frac{c_4}{\sqrt \lambda}.$$
  The last inequality implies
  $$ \varepsilon \geq c_5 \frac{1}{\sqrt \lambda \log \lambda} \quad \text {if } \lambda >2.$$
  Thus  by \eqref{eq: epsilon}
  $$ |\Omega| \geq c_6 \frac{r^{n}}{\left(\sqrt \lambda \log \lambda\right)^{n-1}}.$$
  
\subsection{Local bound for the number of nodal domains.}\label{sec: Fedya} We follow the argument suggested by Fedor Nazarov. All misprints, bad notation and presentation are on the authors. 

 Denote by $K$ the number of connected components $\Omega_i$ of $B_{2r}\setminus Z_u$ that intersect a twice smaller ball $B_r$ with the same center and  are $c_0$-narrow on scale $\frac{1}{\sqrt \lambda}$. The number of $\Omega_i$, which are not $c_0$-narrow on scale $\frac{1}{\sqrt \lambda}$ (and therefore have volume at least $c_1\lambda^{-n/2}$) is bounded by $C\lambda^{n/2} r^n$.
We want to show that $K$ cannot be too large and we argue by assuming the contrary and arrive to a contradiction. Let $\varepsilon=a/ \sqrt \lambda$ and assume that $K>2^{n+2}/\varepsilon^{n-1}$. The number $a>0$ will be a small constant  depending only on $(M,g)$. 
Each $\Omega_i$ intersects $B_r$, so there is a point $x_i$ in $\Omega_i \cap B_r$.
Since $$\sum |\Omega_i \cap B_r(x_i)| \leq |B_{2r}|,$$ 
there are at least $\frac{3}{4}K$ domains $\Omega_i$ such that $\frac{|\Omega_i \cap B_r(x_i)|}{|B_{2r}|} \leq \frac{4}{K}$ and therefore
\begin{equation} \label{eq:  narrow}
\frac{|\Omega_i \cap B_r(x_i)|}{|B_r(x_i)|} \leq \varepsilon^{n-1}.
\end{equation}
Denote by $S_m$ the set of domains $\Omega_i$, which are
$c_0$-narrow on scale $\frac{1}{\sqrt \lambda}$ and 
\begin{equation} \label{eq:  narrow2}
\frac{|\Omega_i \cap B_{2^{-j}r}(x_i)|}{|B_{2^{-j}r}(x_i)|} \leq \varepsilon^{n-1} \quad \text{for } j\in \{0,1,\dots , m\}.
\end{equation}
The number of elements in $S_0$ satisfies $|S_0|>K-K/4$ by \eqref{eq:  narrow}.
 Fix $m \geq 0$, and consider only $\Omega_i \in S_m$ and forget about the rest of $\Omega_i$. Let $\rho=r2^{-m}$.
Using \eqref{eq:  narrow2} and applying growth Lemma \ref{le: Landis} for $B_{2^{-j}r}(x_i)$ for $j\in \{0,1,\dots , m\}$, we have 
\begin{equation}
\frac{\sup_{\Omega_i \cap B_{\rho/2}(x_i)}|u|}{\sup_{ B_{2r}}|u|} \leq \left(\frac{\rho/2}{2r} \right)^{c/\varepsilon}.   
\end{equation}
By Remez type inequality \eqref{eq: Remez} applied to $\cup (\Omega_i \cap B_{\rho}(x_i)) $, where the union is taken  over $\Omega_i \in S_m$, we have
$$ c\left( c \frac{\sum |\Omega_i \cap B_{\rho/2}(x_i)|}{|B_{2r}|} \right)^{C\sqrt \lambda }\leq \frac{\sup_{\cup (\Omega_i \cap B_{\rho/2}(x_i))}|u|}{\sup_{ B_{2r}}|u|}.$$
So 
$$c\left( c \frac{\sum |\Omega_i \cap B_{\rho/2}(x_i)|}{|B_{2r}|} \right)^{C\sqrt \lambda } \leq \left(\frac{\rho}{4r} \right)^{c/\varepsilon}$$
and therefore
$$c\left( c \frac{\sum |\Omega_i \cap B_{\rho/2}(x_i)|}{|B_{\rho/2}|} \right)^{C\sqrt \lambda } \leq \left(\frac{\rho}{4r} \right)^{c/\varepsilon - Cn\sqrt \lambda}.$$
Since $\varepsilon = a/ \sqrt \lambda \ll 1/\sqrt \lambda$, we have 
$$  \frac{\sum |\Omega_i \cap B_{\rho/2}(x_i)|}{|B_{\rho/2}|}  \leq C_2 \left(\frac{\rho}{4r} \right)^{c_3/a} \leq 4^{-m-2}. $$
So the number of $\Omega_i \in S_m$ with 
$$ \frac{|\Omega_i \cap B_{\rho/2}(x_i)|}{|B_{\rho/2}(x_i)|} > \varepsilon^{n-1} $$
is not greater than $K4^{-m-2}$. So $|S_{m}\setminus S_{m+1}|\leq K4^{-m-2}$. Note that $S_{m+1}\subset S_m$.
 Thus for each $S_m$ the number of $\Omega_i\in S_m$ is at least $$K- \sum_j K4^{-j}  \geq K/2.$$
However 
$$ \frac{|\Omega_i \cap B_\rho(x_i)|}{|B_\rho(x_i)|} = 1 \text{ for sufficiently small } \rho.$$
The number of $\Omega_i$ is finite by the volume bound proved in Section \ref{sec: volume}.
So $S_m$ should be empty for $m$ sufficiently large and the contradiction is obtained.
\section{Appendix.}
In this section we gather several facts that we used in the proof, and which are well-known to specialists, but the assumption on the metric in the references is $C^\infty$-smoothness in place of $C^1$-smoothness (or it is not clear whether it is $C^1$ or $C^\infty$). Unfortunately, the exposition is not self-contained, and the complete proof uses several exterior nontrivial facts.

Let $u$ be a non-zero solution in $B_1(0)$ to the second order linear elliptic equation:
 $$ \sum_{i,j}\partial_i (a_{ij}\partial_j u) + \sum_j b_j \partial_j u + c u=0,$$
 where $(a_{ij})$ is an elliptic matrix with Lipschitz coefficients, and $b_j$, $c$ are bounded.
 
 Garofalo and Lin \cite{GL86},\cite{GL87} proved a powerful monotonicity formula  for solutions of such equations, which implies the three balls inequality:
 \begin{equation} \label{3balls}
 \sup_{B}|u| \leq C \left(\sup_{\frac{1}{2}B}|u|\right)^\alpha \left(\sup_{2B}|u|\right)^{1-\alpha},
 \end{equation}
where $B$ is ball such that $2B \subset B_1(0)$, and $C>0$, $\alpha \in (0,1)$ depend only on the ellipticity constants, on the Lipschitz constant, and on the $L^\infty$ norm of the lower order terms. 
 
 \textbf{Cone condition for nodal sets.}
 Assume that $(a_{ij})$ is the identity matrix at the origin.
 The monotonicity formula of Garofalo and Lin implies (after some work, see \cite{NV}) that $u$ can be  well approximated by a harmonic polynomial near the origin. The story of approximation properties of elliptic equations  at a point with higher order zero   goes back to the works of Bers \cite{B55}, and  Caffarelli and Friedman \cite{CF85}.
The formal statement is that there is a non-zero homogeneous harmonic polynomial $P$ of some degree $n$,
 such that 
 $$ u(x)= P(x)+o(|x|^n).$$
 For homogeneous polynomials one can inscribe a cone  with vertex at $0$ in every nodal domain
  so that  $|P(x)| \geq c|x|^n$, $c>0$, in this cone.
 As the corollary of this fact one may conclude the following.

 Let $u$ be the Laplace eigenfunction on $(M,g)$, where $g=(g_{ij})$ is $C^1$-smooth.
  Let $\Omega$ be a nodal domain of $u$ and a point $x_0\in \partial \Omega$. Then
  there is an open truncated cone in $\Omega$ with vertex at $x_0$.
  It implies the exterior cone condition as well.
We refer to \cite{GM} for the question and estimates (in terms of the eigenvalue) on how large  the spatial angle of the cone is.

 \textbf{Doubling condition for eigenfunctions due to Donnelly and Fefferman.}
 Consider a ball $\widetilde B$ in local coordinates on $M$. Then a version of three
 balls inequality holds for eigenfunctions:
   \begin{equation} \label{3balls for eigenfunctions}
 \sup_{B}|u| \leq C e^{C\sqrt \lambda} \left(\sup_{\frac{1}{2}B}|u|\right)^\alpha \left(\sup_{2B}|u|\right)^{1-\alpha} \quad \text{ if } 2B \subset \widetilde B.
 \end{equation}
 Inequality \eqref{3balls for eigenfunctions} can be reduced to \eqref{3balls} by the harmonic extension, i.e. considering a harmonic function $h(x,t)=u(x)\exp(\sqrt \lambda t)$ on $M\times R$.
 Donnelly and Fefferman proved \eqref{3balls for eigenfunctions} in a different way by Carleman inequalities. They used compactness of $M$ to show that iterations of \eqref{3balls for eigenfunctions}  (see the argument in
\cite{DF88}, page 162, after formula (1.5)) imply 
\begin{equation} 
 \sup_{\frac{1}{2}B} |u| \geq c e^{-C\sqrt \lambda }\sup_{B}|u|
\end{equation} 
for any ball $B$ on $M$.

\textbf{Eremenko's lemma}(\cite{M}). Let $u$ be a harmonic function on the plane. The doubling index of $u$ in a ball $B$ is defined by 
$$N= \log\frac{\sup_{2B}|u|}{\sup_{B}|u|}.$$
Then every nodal domain of $u$ in $2B$ that intersects $B$ has area at least $\frac{c}{N+C}$ and therefore 
the number of nodal domains that intersect $B$ is not greater than $CN+C$.

The proof of Eremenko's lemma is an elegant two-dimensional argument  that is using the fact that $\log|\nabla u|$ is subharmonic. The log-subharmonicity property is not true in higher dimensions.

\end{document}